\documentclass[12pt]{amsart}
\usepackage{amssymb,latexsym}
\usepackage{amsthm, amsmath}
\usepackage{amsfonts}
\usepackage{amssymb}
\usepackage{enumerate}

\makeatletter
\@namedef{subjclassname@2010}{%
  \textup{2010} Mathematics Subject Classification}
\makeatother

\newtheorem{thm}{Theorem}[section]
\newtheorem{lem}[thm]{Lemma}
\newtheorem{cor}[thm]{Corollary}

\theoremstyle{definition}

\newtheorem{exa}[thm]{Example}
\newtheorem{rem}[thm]{Remark}

\def\log{\operatorname{log}}

\def\proj{\operatorname{proj}}
\def\dist{\operatorname{dist}}
\def\diam{\operatorname{diam}}
\def\iint{\operatorname{int}}

\newcommand{\C}{{\mathbb{C}}}
\newcommand{\D}{{\mathbb{D}}}
\newcommand{\G}{{\mathbb{G}}}

\newcommand{\R}{{\mathbb{R}}}

\renewcommand{\O}{{\mathcal{O}}}

\numberwithin{equation}{section}

\frenchspacing

\textwidth=13.5cm
\textheight=23cm
\parindent=16pt
\oddsidemargin=-0.5cm
\evensidemargin=-0.5cm
\topmargin=-0.5cm

\begin{document}

\baselineskip=17pt

\title[Proper mappings vs. peak points]{Proper holomorphic mappings vs. peak points and Shilov boundary}

\author[\L.~Kosi\'nski]{\L ukasz Kosi\'nski}
\address{Department of Mathematics, Faculty of Mathematics and Computer Science, Jagiellonian University,
\L ojasiewicza 6, 30-348 Krak\'ow, Poland}
\email{lukasz.kosinski@im.uj.edu.pl}

\author[W.~Zwonek]{W\l odzimierz Zwonek}
\address{Department of Mathematics, Faculty of Mathematics and Computer Science, Jagiellonian University,
\L ojasiewicza 6, 30-348 Krak\'ow, Poland}
\email{wlodzimierz.zwonek@im.uj.edu.pl}

\begin{abstract} We present a result on existence of some kind of peak functions for $\C$-convex domains
and for the symmetrized polydisc. Then we apply the latter result to show the equivariance of the set of peak points for $A(D)$ under proper holomorphic mappings. Additionally,
we present a description of the set of peak points in the class of bounded pseudoconvex Reinhardt domains.
\end{abstract}

\thanks{The work is partially supported by the grant of the Polish Ministry for Science and Higher Education No. N N201 361436.}
\keywords{Shilov boundary, peak points, proper holomorphic mappings, $c$-finite compactness, pseudoconvex Reinhardt domains}
\subjclass[2010]{Primary 32T40, Secondary: 32H35, 32A07, 32F45}

\maketitle

\section{Introduction}
The aim of the paper is to present some results on (different kinds of) holomorphic peak points -
for basic definitions and properties on this subject see the survey article \cite{N}.
First we show the existence of a special kind of weak
peak functions for the class of $\C$-convex domains (see Theorem~\ref{th: c-convex}). We also show the existence
of peak functions for the symmetrized polydisc (see Theorem~\ref{prop: pol}). The latter is crucial in the proof of the main result of the paper,
the equivariance of the set of peak points under proper holomorphic mappings (see Theorem~\ref{th:prop}). It is interesting that the symmetrized polydisc, a domain that has been recently extensively studied, is also an important tool in the proof of the invariance of $c$-finite compactness (a notion slightly stronger than
the Carath\'eodory completeness) under proper holomorphic mappings (see Theorem~\ref{th: c-finite}).
Then we give a description of the set of peak points for the class of
pseudoconvex Reinhardt domains (see Theorem~\ref{th: sh}). Finally we deal with the claim of Bremermann on the form of the Shilov boundary
of the schlicht envelope of holomorphy - although we do not know whether Bremermann's claim is correct in general we can prove it
in the class of Reinhardt domains (see Corollary~\ref{cor: envelope}).

\section{Peak functions in the symmetrized polydisc and in $\C$-convex domains}
We start by recalling a description of the symmetrized polydisc which is basic in many proofs that we present in the paper.

Let us recall that the {\it symmetrized polydisc }
is a domain denoted by $\mathbb G_n$ and given by the formula $\mathbb G_n=\pi_n (\mathbb D^n),$
where $\pi_n=(\pi_{n,1},\ldots, \pi_{n,n})$ is defined as follows
$$\pi_{n,k}(\lambda)=\sum_{1\leq j_1<\ldots<j_k\leq n} \lambda_{j_1}\cdots \lambda_{j_k},\quad \lambda\in \mathbb C^n,\ k=1,\ldots,n.$$

$\mathbb D$ denotes the unit disc in $\mathbb C$. We also put $\mathbb T:=\partial\mathbb D$.

Following \cite{Cos} for $n\geq 2$ we define $$p_{n,\lambda}(z)=\tilde z(\lambda)=(\tilde{z}_1(\lambda),\ldots, \tilde z_{n-1}(\lambda))
\in \mathbb C^{n-1},\quad z\in \mathbb C^n,\ \lambda\in \mathbb C, n+\lambda z_1\neq 0,$$ where
$$\tilde z_j(\lambda)= \frac{(n-j) z_j +\lambda (j+1)z_{j+1}}{n+\lambda z_1},\quad 1\leq j\leq n-1.$$
It is known (see \cite{Cos}) that $z\in \mathbb G_n$ if and only if $\tilde z(\lambda)\in \mathbb G_{n-1}$
and $n+\lambda z_1\neq 0$ for any $\lambda\in \overline{\mathbb D}$. Recall that the symmetrized polydisc is a domain that came up naturally a few years ago in the problem of $\mu$-synthesis and turned out to have extremely interesting function geometric properties (see e. g. \cite{AY}, \cite{C2004}, \cite{EZ} and many others).

We shall need the following result which is interesting in its own right.

\begin{thm}\label{prop: pol} For any $a$ in the topological boundary of $\mathbb G_n$ there is a neighborhood
$U$ of the point $a$ and a mapping $\varphi$ holomorphic on $\mathbb G_n\cup U$ such that
$|\varphi|<1$ on $\mathbb G_n$ and $\varphi(a)=1.$
\end{thm}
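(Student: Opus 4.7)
The natural strategy is induction on $n$, using the recursive description recalled from \cite{Cos}: $z\in\mathbb G_n$ iff $n+\lambda z_1\neq 0$ and $\tilde z(\lambda)\in\mathbb G_{n-1}$ for every $\lambda\in\overline{\mathbb D}$. The base case $n=1$ is immediate, since $\mathbb G_1=\mathbb D$ and for $a\in\mathbb T$ the entire function $\varphi(z)=(1+\bar a z)/2$ satisfies $\varphi(a)=1$ and $|\varphi|<1$ on $\mathbb D$.

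For the inductive step, let $a\in\partial\mathbb G_n$ and split according to which of the two conditions in the characterization fails. \emph{Case 1:} there exists $\lambda_0\in\overline{\mathbb D}$ with $n+\lambda_0 a_1=0$. Since $\mathbb G_n\subset\{|z_1|<n\}$ (as $z_1$ is a sum of $n$ points in $\mathbb D$), closure gives $|a_1|\leq n$; on the other hand $|a_1|=n/|\lambda_0|\geq n$. Thus $|a_1|=n$, and writing $a_1=ne^{i\theta}$ the entire function $\varphi(z)=e^{-i\theta}z_1/n$ peaks at $a$ while $|\varphi(z)|=|z_1|/n<1$ on $\mathbb G_n$. \emph{Case 2:} $n+\lambda a_1\neq 0$ for every $\lambda\in\overline{\mathbb D}$. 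Then $\tilde a(\lambda)$ is continuous on $\overline{\mathbb D}$; approximating $a$ by points of $\mathbb G_n$ shows $\tilde a(\lambda)\in\overline{\mathbb G_{n-1}}$ for all such $\lambda$, and since $a\notin\mathbb G_n$ there must exist $\lambda_0\in\overline{\mathbb D}$ with $b:=\tilde a(\lambda_0)\in\partial\mathbb G_{n-1}$. By the inductive hypothesis, choose a neighborhood $V$ of $b$ and $\psi$ holomorphic on $\mathbb G_{n-1}\cup V$ with $\psi(b)=1$ and $|\psi|<1$ on $\mathbb G_{n-1}$. Define $\varphi(z):=\psi(p_{n,\lambda_0}(z))$. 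On a small neighborhood $U$ of $a$, continuity keeps $n+\lambda_0 z_1\neq 0$ and $p_{n,\lambda_0}(z)\in V$; on $\mathbb G_n$ the characterization gives $p_{n,\lambda_0}(z)\in\mathbb G_{n-1}$ and the same nonvanishing. These two definitions agree on $U\cap\mathbb G_n$, producing a holomorphic $\varphi$ on $\mathbb G_n\cup U$ with $\varphi(a)=\psi(b)=1$ and $|\varphi|<1$ on $\mathbb G_n$.

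The main obstacle I anticipate is the bookkeeping around Case 2, specifically verifying that the failure of membership in $\mathbb G_n$ really propagates to a genuine boundary point of $\mathbb G_{n-1}$ in the image (so that the inductive hypothesis applies), and then checking that the pullback is well defined and holomorphic across the seam where the intrinsic definition on $\mathbb G_n$ meets the local extension coming from $\psi\circ p_{n,\lambda_0}$. Everything else—the explicit Case 1 function and the base case—is routine; the content of the argument is really packaged inside the $p_{n,\lambda}$ description and the induction.
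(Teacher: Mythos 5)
Your proof is correct and follows essentially the same route as the paper: induction on $n$, the explicit peak function in the variable $z_1$ when $|a_1|=n$, and otherwise pushing the boundary point to $\partial\mathbb G_{n-1}$ via $p_{n,\lambda_0}$ (with the continuity argument giving $\tilde a(\lambda_0)\in\overline{\mathbb G}_{n-1}$) and composing the inductive peak function with $p_{n,\lambda_0}$. You only spell out in more detail the case split and the gluing of the two definitions of $\psi\circ p_{n,\lambda_0}$, which the paper leaves implicit.
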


\begin{proof} We proceed inductively. For $n=1$ the statement is clear.
Let us take $n>1.$ If $|a_1|=n$ then it suffices to define $\varphi(z):=z_1\frac{|a_1|}{na_1}.$
In the other case it follows from the above description of $\mathbb G_n$ that there is a $\lambda\in\overline{\mathbb D}$
such that $p_{n,\lambda}(a)\in \mathbb C^{n-1}\setminus \mathbb G_{n-1}.$ By the continuity argument $p_{n,\lambda}(a)$ lies in $\overline{\mathbb G}_{n-1}$, whence $p_{n,\lambda}(a)$ lies in the topological boundary of $\mathbb G_{n-1}.$
Applying the inductive assumption we find a mapping $\tilde{\varphi}$ peaking at $p_{n,\lambda}(a)$
for $\mathbb G_{n-1}.$ Composing it with $p_{n,\lambda}$ we get a desired mapping.
\end{proof}

In fact it is the above result that we need in the proofs of the main results of the paper. Nevertheless, we present below another result
on existence of a (weaker) peak function in $\C$-convex domains which we find interesting too.
To the authors' knowledge the existence of functions presented in the theorem below is not known.


 First recall that a domain $D\subset\mathbb C^n$ is {\it $\C$-convex} if $l\cap D$
is connected and simply connected for any complex line $l$ intersecting $D$ (for basic properties of $\C$-convex domains see e. g.
\cite{APS}).

We have namely

\begin{thm}\label{th: c-convex} Let $D$ be a $\mathbb C$-convex domain in $\mathbb C^n$.
Then for any $a\in \partial D$ there is a $\varphi\in \mathcal O(D)\cup \mathcal C(D\cup \{a\})$
such that $|\varphi|<1$ on $D$ and $\varphi(a)=1.$
\end{thm}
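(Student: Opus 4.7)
The plan is to reduce the construction to a one-variable calculation by exploiting two structural features of $\mathbb{C}$-convex domains from \cite{APS}: every boundary point of $D$ is contained in a complex hyperplane disjoint from $D$ (linear convexity), and $D$ is simply connected (indeed contractible). Together these will allow me to build the peak function from a branch of the logarithm of a suitable affine function.

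By linear convexity I can choose a complex affine function $L\colon\mathbb{C}^n\to\mathbb{C}$ with $L(a)=0$ and $L(z)\neq 0$ for every $z\in D$. Since $D$ is simply connected and $L$ does not vanish, a holomorphic branch $\ell:=\log L\in \mathcal{O}(D)$ exists, and $\operatorname{Re}\ell(z)=\log|L(z)|\to-\infty$ as $z\to a$. Assuming for the moment that $|L|$ is bounded on $D$ (e.g.\ if $D$ is bounded), I rescale $L$ by a small positive constant so that $\operatorname{Re}\ell<0$ throughout $D$, and define
$$\varphi(z):=\frac{\ell(z)}{\ell(z)-1}\quad\text{for } z\in D,\qquad \varphi(a):=1.$$

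Verification is direct: the condition $\operatorname{Re}\ell<0<1$ guarantees $\ell\neq 1$, so $\varphi\in\mathcal{O}(D)$; the identity $|\ell-1|^2-|\ell|^2=1-2\operatorname{Re}\ell>1$ yields $|\varphi|<1$ on $D$; and $|1-\varphi(z)|=1/|\ell(z)-1|\leq 1/(1-\operatorname{Re}\ell(z))\to 0$ as $z\to a$ provides the continuous extension to $a$ with value $1$, so $\varphi\in\mathcal{O}(D)\cap\mathcal{C}(D\cup\{a\})$. The only non-routine step is the rescaling that forces $\operatorname{Re}\ell<0$, which relies on $|L|$ being bounded on $D$. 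For unbounded $\mathbb{C}$-convex domains $L$ may be unbounded and the rescaling fails; a natural remedy is to compose $L$ with a Riemann map of the simply connected proper subdomain $\Omega:=L(D)\subset\mathbb{C}$ onto $\mathbb{D}$, after which one must argue that $0\in\partial\Omega$ corresponds to a singleton prime end so that the composition extends continuously at $a$. This prime-end continuity, which should follow from further consequences of $\mathbb{C}$-convexity, is where I expect the main technical difficulty to lie.
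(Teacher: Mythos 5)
Your argument is complete and correct whenever $L(D)$ is bounded (in particular whenever $D$ is bounded), and in that range it is essentially the paper's argument: by linear convexity one projects along a hyperplane through $a$ missing $D$, takes a branch of $\log$ of the resulting affine function on a simply connected planar image, and composes with a conformal map of a half-plane onto $\mathbb{D}$ sending $\infty$ to $1$ (your M\"obius map $\ell/(\ell-1)$ plays the role of the paper's $\exp(1/f)$ with $f=\log((x-a)d^{-1})$). The genuine gap is the unbounded case, which the theorem does not exclude: for unbounded $\mathbb{C}$-convex $D$ (already for $n=1$, where $\mathbb{C}$-convex just means simply connected) the image $\Omega=L(D)$ can be unbounded and your rescaling is unavailable, exactly as you note.

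Moreover, the remedy you sketch does not work as stated. Take $D=\mathbb{C}\setminus[0,\infty)$ and $a=1$: then $\Omega$ is a translated slit plane and the boundary point $0$ is accessible from both sides of the slit, hence carries two distinct prime ends; the composition of $L$ with a Riemann map of $\Omega$ onto $\mathbb{D}$ has different limits along the two approaches and does not extend continuously at $a$. So the ``singleton prime end'' property you hope to extract from $\mathbb{C}$-convexity is simply false in general, even though the weak peak function does exist. The paper avoids prime ends altogether by uniformizing a larger domain: it takes the connected component $I$ of the boundary containing the point in question, removes a closed connected nondegenerate subset $J\subset I$ not containing that point, and applies the Riemann map $F$ of the component $C$ of $\overline{\mathbb{C}}\setminus J$ containing the point; since the point is then an \emph{interior} point of $C$, continuity of $F$ there is automatic, $F$ maps the relevant domain into $\mathbb{D}$, and the bounded case already proved applies to the image. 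Some device of this kind (or another argument producing a weak peak function for an arbitrary, possibly unbounded, simply connected planar domain) is needed to close your proof; without it the cases $n=1$ unbounded, and $n>1$ with unbounded projection, remain unproved.
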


For the case of a one dimensional bounded domain a stronger result has been obtained in \cite{Be} under the additional assumption that every boundary point of $D$ is linearly accessible from the interior. In the proof below we shall show that the statement of the theorem holds without this assumption. It is possible that the latter result is known. However, we could not find a reference for that result; therefore, we present the proof in this case.

\begin{proof} First we focus on the case when $D$ is a bounded domain of a complex plane.
Let $d=\diam D.$ Since $D$ is simply connected, $f(x)=\log ((x-a)d^{-1}),$ $x\in D,$ is a well defined holomorphic function on $D.$
Then $\varphi=\exp(1/f)$ satisfies the desired properties.

Now we show that \emph{for any domain $D\subset \mathbb C$ and for any $a\in \partial D$ such that
the connected component of $\partial D$ containing the point $a$ contains at least two elements
there is a holomorphic mapping $\varphi$ on $D$ continuous on $D\cup \{a\}$ such that $|\varphi|<1$ on $D$ and $\varphi(a)=1$}.
(Clearly, this statement implies the assertion for an arbitrary simply-connected proper domain of the complex plane).
Actually, let $I$ be the connected component of $\partial D$ containing $a$ and take any nonempty closed connected subset $J$ of $I$
not containing $a$ and such that $J$ is not a point.
Let $C$ be the connected component of $\overline{\mathbb C}\setminus J$ containing $a$.
It follows from the Riemann mapping theorem that there is a conformal mapping $F$ between $C$ and the unit disc.
Applying the former step to $F(\tilde D)$, where $\tilde D$ is a connected component of $\overline{\mathbb C}\setminus I$
containing $D$, we get a function $\tilde{\varphi}$ weakly-peaking at $F(a)$.
Then the composition $\varphi=\tilde{\varphi}\circ F$ satisfies the assertion.

We are left with the case $n>1.$ Consider a complex hyperplane $H$ through $a$ such that $D\cap H=\emptyset$ (see Theorem~2.3.9 in \cite{APS})
and let $l$ be a complex line orthogonal to $H.$ Denote by $\tilde D$ (respectively $\tilde a$) the projection of $D$ (resp. $a$)
onto $l$ in direction $H$. Then $\tilde D$ is a simply-connected domain (see \cite{APS}, Theorem~2.3.6)
and $\tilde a$ is its boundary point. It remains to apply the one dimensional case
and compose the peak function with the projection in direction $H$.
\end{proof}

\begin{rem} The above theorem needs some comment. First, note that the peak functions appearing in 
Theorem~\ref{th: c-convex} are weaker than the ones existing in $\G_n$ (see Theorem~\ref{prop: pol}).
Secondly, one cannot apply the above theorem to $\G_n$ since the symmetrized polydisc $\G_n$ is $\C$-convex iff $n\leq 2$
(see \cite{NPZ}).
\end{rem}

Motivated by the last result we introduce the following definition.
We shall say that a domain $D$ in $\mathbb C^n$ satisfies property ($\dag$)
if for any $a\in \partial D$ there is a $\varphi\in \mathcal O(D,\mathbb D)\cup \mathcal C(D\cup \{a\})$ such that $\varphi(a)=1$. We call such a function a {\it weak peak function at $a$}.

The example of pointed disc shows that being linearly convex is not sufficient for a domain to satisfy the property ($\dag$).

It is clear that any bounded domain satisfying the property ($\dag$) is $c$-finitely compact (for definition see e. g. \cite{JP})
and (hence) Carath\'eodory complete and hyperconvex.
It also follows from Theorem~\ref{prop: pol}
that the symmetrized polydisc $\mathbb G_n$ is $c$-finitely compact which was first proven in \cite{NPZ}.

It seems very probable that the property ($\dag$) is very closely related to the Carath\'eodory completeness of a domain. Recall that any bounded complete Reinhardt domain is $c$-finitely compact (see e. g. \cite{JP}). Therefore, it would be interesting to know whether all bounded complete pseudoconvex Reinhardt domains
satisfy the property ($\dag$).

\section{Peak points under proper holomorphic mappings}
For a bounded domain $D\subset\mathbb C^n$ we denote by $A(D)$ the algebra of all functions holomorphic on $D$
and continuous on $\overline{D}$. The set of peak points for $A(D)$, i. e. such points $z\in\overline{D}$ that there is an $f\in A(D)$
with $|f(z)|>|f(w)|$ for any $w\in \overline{D}\setminus\{z\}$, is denoted by $P(D)$.
Our main result in this section and in the whole paper is the following.

\begin{thm}\label{th:prop} Let $D,G$ be bounded domains in $\C^n$. Let $F: D\to G$ be a proper holomorphic mapping
which extends to a proper holomorphic mapping $U\to V$, $\overline D\subset U,$ $\overline G\subset V$ with the same multiplicity.
Then $a\in \partial D$ is a peak point for $A(D)$  if and only if $F(a)$ is a peak point for $A(G)$.
\end{thm}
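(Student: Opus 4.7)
The argument splits into two implications. Throughout I will write $F^{-1}(F(a)) = \{a = a_1, a_2, \ldots, a_k\}$; the hypothesis that the extension $F : U \to V$ is proper with the same multiplicity $m$ as $F : D \to G$ forces $F^{-1}(\overline G) \cap U = \overline D$, so the fiber of every point of $\overline G$ lies in $\overline D$ and has total multiplicity $m$.

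For the forward implication, suppose $f \in A(D)$ peaks at $a$ and pass through the symmetrized polydisc. Define $\Phi : \overline G \to \overline{\G_m}$ by $\Phi(w) := \pi_m(f(z_1), \ldots, f(z_m))$, where $\{z_1, \ldots, z_m\} = F^{-1}(w)$ with multiplicity. The classical construction of elementary symmetric functions of fiber values of a proper holomorphic map gives $\Phi \in \mathcal O(G, \C^m) \cap \mathcal C(\overline G, \C^m)$ with $\Phi(G) \subset \G_m$, and the peak property of $f$ shows that $|f(z_i)| = 1 \Leftrightarrow z_i = a \Leftrightarrow w = F(a)$, so $\Phi(\overline G \setminus \{F(a)\}) \subset \G_m$ while $b := \Phi(F(a)) \in \partial \G_m$. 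I then apply Theorem~\ref{prop: pol} at $b$ to obtain a neighborhood $\mathcal U$ of $b$ and $\varphi \in \mathcal O(\G_m \cup \mathcal U)$ with $|\varphi| < 1$ on $\G_m$ and $\varphi(b) = 1$. The crucial inclusion $\Phi(\overline G) \subset \G_m \cup \{b\} \subset \G_m \cup \mathcal U$ makes $\psi := \varphi \circ \Phi$ a well-defined element of $A(G)$ that peaks at $F(a)$.

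For the converse I would invoke Bishop's theorem: on the metrizable compacta $\overline D$ and $\overline G$, the peak points of $A(D)$ and $A(G)$ coincide with their respective Choquet boundaries. Let $\mu$ be any probability measure on $\overline D$ representing $a$ for $A(D)$. Since $h \circ F \in A(D)$ whenever $h \in A(G)$, the pushforward $F_\ast \mu$ represents $F(a)$ for $A(G)$; by assumption $F(a)$ lies in the Choquet boundary of $A(G)$, so $F_\ast \mu = \delta_{F(a)}$. Consequently $\supp \mu \subset \{a_1, \ldots, a_k\}$ and $\mu = \sum_i c_i \delta_{a_i}$. A Lagrange-type polynomial $p \in \C[z_1, \ldots, z_n]$ with $p(a) = 1$ and $p(a_i) = 0$ for $i \ge 2$ (which exists, since finitely many distinct points of $\C^n$ can always be separated by a polynomial) then yields $c_1 = p(a) = 1$, so $\mu = \delta_a$; by Bishop again, $a \in P(D)$.

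The main obstacle is the boundary behaviour of $\Phi$ in the forward step. Without the same-multiplicity hypothesis on the extension, the fiber over a boundary point of $G$ could acquire additional preimages in $U \setminus \overline D$ at which $f$ is not defined, and $\Phi$ would fail to be continuous, or even well-defined, on $\overline G$; that hypothesis is exactly what forces $F^{-1}(\overline G) \cap U = \overline D$ and delivers the Hausdorff-continuous dependence of fibers. Once this is in place, the forward direction is a direct application of Theorem~\ref{prop: pol}, and the converse reduces to Bishop's theorem together with an elementary interpolation argument.
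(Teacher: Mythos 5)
Your forward implication is the paper's own argument (Lemma~\ref{lem:nzm}): push the peak function $f$ at $a$ through the fibers via $\pi_m\circ(f\times\cdots\times f)\circ F^{-1}$, note that this map sends $\overline G\setminus\{F(a)\}$ into $\G_m$ and $F(a)$ to a point of $\partial\G_m$, and compose with the peak function for the symmetrized polydisc supplied by Theorem~\ref{prop: pol}; like the paper, you assert rather than verify in detail the continuity of this map up to $\partial G$, and your observation that equal multiplicity together with openness of the proper extension forces $F^{-1}(\overline G)\cap U=\overline D$ and continuous dependence of fibers is precisely the point the paper packages into the fiber-convergence hypothesis of Lemma~\ref{lem:nzm}. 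Where you genuinely diverge is the converse. The paper (Lemma~\ref{lem: ea}) is constructive: the finite fiber $F^{-1}(F(a))$ is a peak set for $A(D)$ via $\varphi\circ F$, a polynomial annihilates the remaining fiber points, and Bishop's $1/4$--$3/4$ method assembles an explicit peak function at $a$. You argue abstractly instead: any representing measure $\mu$ for $a$ pushes forward under $F$ to a representing measure for $F(a)$, which must be $\delta_{F(a)}$ because $F(a)$ is a peak (hence Choquet boundary) point, so $\mu$ is carried by the finite fiber, and Lagrange interpolation forces $\mu=\delta_a$; Bishop's theorem for uniform algebras on metrizable compacta then converts uniqueness of the representing measure into the peak property. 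Both converses use only continuity of $F$ on $\overline D$ and finiteness of the fiber, so they have the same scope (matching the paper's remark after Lemma~\ref{lem: ea}); yours is shorter but non-constructive, while the paper's produces the peak function explicitly. Your argument is correct.
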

The example
$\{z\in\D:\operatorname{Re}z>0\}\owns z\mapsto z^2\in \D\setminus(-1,0]$ shows that some additional assumption 
(except for the continuous extension of $F$ onto $\mathbb{D}$) is needed in Theorem~\ref{th:prop}.

It is known that the Shilov boundary of a bounded domain equals the closure of the set of its peak points (see \cite{G}).
Thus Theorem~\ref{th:prop} has the following interesting corollary. By $\partial_sD$ we denote the Shilov boundary of the bounded domain $D$ in $\mathbb C^n$.

\begin{cor} Let $F$ be as above. Then $F^{-1}(\partial_s G)=\partial_s D.$
\end{cor}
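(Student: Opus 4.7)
The plan is to combine Theorem~\ref{th:prop} with the fact (recalled just before the corollary, attributed to \cite{G}) that $\partial_s D=\overline{P(D)}$ and $\partial_s G=\overline{P(G)}$, and to prove the two inclusions separately.

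The easy inclusion is $\partial_s D\subseteq F^{-1}(\partial_s G)$. I would pick $a\in\partial_s D=\overline{P(D)}$ and write $a=\lim a_n$ with $a_n\in P(D)$. Theorem~\ref{th:prop} gives $F(a_n)\in P(G)$, and continuity of the extension $F\colon\overline{D}\to\overline{G}$ yields $F(a)=\lim F(a_n)\in\overline{P(G)}=\partial_s G$, hence $a\in F^{-1}(\partial_s G)$.

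For the reverse inclusion $F^{-1}(\partial_s G)\subseteq\partial_s D$, fix $a\in F^{-1}(\partial_s G)$ and write $F(a)=\lim b_n$ with $b_n\in P(G)\subseteq\partial G$. The aim is to produce a sequence in $P(D)$ converging to $a$. The key tool is the openness of the proper holomorphic map $F\colon U\to V$: for every $k$ the set $F(B(a,1/k)\cap U)$ is an open neighbourhood of $F(a)$, so for all sufficiently large $n$ we can pick a preimage $c_n^{(k)}\in B(a,1/k)\cap U$ with $F(c_n^{(k)})=b_n$. A diagonal choice produces $c_n\in U$ with $F(c_n)=b_n$ and $c_n\to a$. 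Once the $c_n$ are known to lie in $\overline{D}$, Theorem~\ref{th:prop} tells us $c_n\in P(D)$, hence $a\in\overline{P(D)}=\partial_s D$.

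The step I expect to be the main obstacle is exactly this last clause: showing that the preimages $c_n\in U$ furnished by openness actually lie in $\overline{D}$. This is where the same-multiplicity hypothesis comes in. If $F\colon U\to V$ and $F\colon D\to G$ both have multiplicity $m$, then for a generic $b\in G$ all $m$ preimages in $U$ already lie in $D$, so the open set $F^{-1}(G)\cap U\setminus\overline{D}$ would map to $G$ with empty generic fibres and must be empty; thus $F^{-1}(G)\cap U\subseteq\overline{D}$. An openness argument at boundary points then upgrades this to $F^{-1}(\partial G)\cap U\subseteq\partial D$. Since $b_n\in\partial G$, the points $c_n$ therefore lie in $\partial D$, closing the argument. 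This is the only place the same-multiplicity assumption is really used, and it is what rules out pathologies like the example $z\mapsto z^2$ mentioned after Theorem~\ref{th:prop}.
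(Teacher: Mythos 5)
Your proof is correct and follows exactly the route the paper intends: the corollary is stated there without proof as an immediate consequence of Theorem~\ref{th:prop} together with Gamelin's fact that $\partial_s D=\overline{P(D)}$ and $\partial_s G=\overline{P(G)}$. The only nontrivial point, namely that $F^{-1}(\overline{P(G)})\subseteq\overline{F^{-1}(P(G))}$, is precisely what your openness-plus-equal-multiplicity argument (showing $F^{-1}(G)\cap U\subseteq\overline D$ and hence $F^{-1}(\partial G)\cap U\subseteq\partial D$) supplies, and it is sound.
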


The above result generalizes a result on equivariance of the Shilov boundary under proper
holomorphic mappings from \cite{K}.

Theorem~\ref{th:prop} is a consequence of two following more general lemmas.

\begin{lem}\label{lem:nzm} Let $D,G$ be bounded domains in $\C^n$, $a\in\partial D$.
Let $F:D\mapsto G$ be a proper holomorphic mapping of multiplicity $m$
extending continuously onto $\bar D$ and such that the following property is satisfied.

For any $b=F(a)\in\partial G,$ there is a set $\{a_1,\ldots,a_l\}\subset\partial D$ of pairwise different points
and a sequence of positive integers $n_1,\ldots,n_l$ such that for any sequence $G\owns b_k\to b$
such that $F^{-1}(b_k)=\{a_{k,1},\ldots,a_{k,m}\}$ after a permutation (if necessary) the sequences $(a_{k,j})_j$
are convergent to elements $a_1,\ldots,a_l$ and for any $k=1,\ldots,l$
the number of sequences $(a_{k,j})_j$ tending to $a_k$ is equal to $n_k$.

If $a$ is a peak point for $A(D)$, then $F(a)$ is a peak point for $A(G)$.
\end{lem}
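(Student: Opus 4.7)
The plan is to reduce the problem to Theorem~\ref{prop: pol} by pushing a peak function at $a$ forward into the symmetrized polydisc $\G_m$. Let $f\in A(D)$ peak at $a$ with $f(a)=1$ and $|f(z)|<1$ for $z\in\overline D\setminus\{a\}$. Since $F(a)=b$, the point $a$ must appear among the cluster preimages of $b$; relabel so that $a=a_1$. The peak function on $G$ will have the form $g=\varphi\circ\Phi$, where $\Phi$ symmetrises the values of $f$ along fibres of $F$ and $\varphi$ is supplied by Theorem~\ref{prop: pol}.

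First, I construct $\Phi\colon\overline G\to\C^m$ by setting
$$\Phi(w)=\pi_m\bigl(f(z_1(w)),\ldots,f(z_m(w))\bigr),$$
where $z_1(w),\ldots,z_m(w)\in\overline D$ are the $m$ preimages of $w$ under the continuous extension of $F$, counted with multiplicity. Symmetry of $\pi_m$ makes $\Phi$ independent of the labelling; holomorphy on $G$ follows from the standard observation that symmetric functions of fibres of a branched covering are holomorphic; continuity on $\overline G$ is inherited from continuity of $F|_{\overline D}$ and of $f$. By construction $\Phi(\overline G)\subset\pi_m(\overline\D^m)=\overline\G_m$.

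Next, I locate $\Phi$ relative to $\partial\G_m$. For any $w\in\overline G\setminus\{b\}$ one has $F(a)=b\ne w$, so $a$ is not among the boundary preimages of $w$; hence $|f(z_j(w))|<1$ for every $j$, which forces $\Phi(w)\in\pi_m(\D^m)=\G_m$. At $w=b$ the hypothesis of the lemma identifies the fibre as $a_1,\ldots,a_l$ with multiplicities $n_1,\ldots,n_l$, giving
$$\Phi(b)=\pi_m\bigl(\underbrace{1,\ldots,1}_{n_1},\underbrace{f(a_2),\ldots,f(a_2)}_{n_2},\ldots,\underbrace{f(a_l),\ldots,f(a_l)}_{n_l}\bigr)\in\partial\G_m,$$
the membership in the boundary holding because one of the arguments equals $1\in\partial\D$.

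Finally I apply Theorem~\ref{prop: pol} at the boundary point $\Phi(b)\in\partial\G_m$ to obtain a neighbourhood $U$ of $\Phi(b)$ in $\C^m$ and $\varphi\in\O(\G_m\cup U)$ with $|\varphi|<1$ on $\G_m$ and $\varphi(\Phi(b))=1$. The previous step shows $\Phi(\overline G)\subset\G_m\cup\{\Phi(b)\}\subset\G_m\cup U$, so $g:=\varphi\circ\Phi$ is defined on all of $\overline G$, continuous there, and holomorphic on $G$; thus $g\in A(G)$, $g(b)=1$, and $|g(w)|=|\varphi(\Phi(w))|<1$ for every $w\in\overline G\setminus\{b\}$, proving $b=F(a)$ is a peak point of $A(G)$. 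The principal obstacle I anticipate is verifying the strict inclusion $\Phi(\overline G\setminus\{b\})\subset\G_m$ together with the continuity of $\Phi$ at $b$; both draw on the cluster structure of the fibres granted by the hypothesis, which guarantees that $a$ is the unique boundary preimage above $b$ at which $|f|$ attains the value one and that the multiplicities $n_i$ are stable as $w\to b$.
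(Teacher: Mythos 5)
Your construction is exactly the paper's: you form the fibre-symmetrized map $\pi_m\circ(f\times\cdots\times f)\circ F^{-1}\in\O(G,\G_m)$, use the hypothesis on the cluster structure of the fibres to extend it continuously to $\bar G$ with $\bar G\setminus\{b\}$ sent into $\G_m$ and $b$ to $\partial\G_m$, and then compose with the peak function for $\G_m$ supplied by Theorem~\ref{prop: pol}. The proposal is correct and essentially identical to the paper's proof.
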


\begin{proof} Let $\{a_1,\ldots,a_l\}$ be as above where $a_1=a$. Let $\phi\in A(D)$ be such that $\phi(a)=1$,
$|\phi(z)|<1$ for any $z\in \bar D\setminus\{a\}$. Consider the mapping $g:=\pi_m\circ (\phi\times\ldots\times\phi)\circ F^{-1}$.
Then $g\in\O(G,\G_m)$ and because of properties imposed on $F$ we get that $g$ extends continuously
to $\bar G$, $g(\bar G\setminus\{b\})\subset\G_m$ and $g(b)\in\partial\G_m$.
Now let $\varphi:\G_m\cup\{g(b)\}\mapsto\D\cup\{1\}$ be a continuous function,
holomorphic on $\mathbb G_m$ such that $\varphi(g(b))=1$ and $|\varphi(z)|<1$, $z\in\G_m$
(such a function exists by Theorem~\ref{prop: pol}). Then $\varphi\circ g$ is an $A(G)$-peak function for $b$.
\end{proof}

\begin{rem}
 Using similar reasoning one may show that $F$ maps peak points for $A^{k}(D)$
($k\in \mathbb N\cup\{\infty,\omega\}$) to peak points for $A^k(D)$ provided that $F$ extends to a proper and holomorphic
mapping $U\mapsto V$ where $U\supset\bar D$, $V\supset\bar G$ and $D=f^{-1}(G)$.
Actually, it suffices to make use of the fact that Theorem~\ref{prop: pol} guarantees that the mapping $\varphi \circ g$ occurring
in the proof of the above lemma is of class $A^k$ provided that $\phi$ is.
\end{rem}

\begin{lem}\label{lem: ea} Let $D,G$ be bounded domains in $\C^n$.
Let $F:D\to G$ be a proper holomorphic mapping extending continuously to $\bar D$
and such that the fibers $F^{-1}(y)$ are finite for any $y\in \bar G.$ Then if $y$ is a peak point for $A(G)$
then any point of $F^{-1}(y)$ is a peak point for $A(D)$.
\end{lem}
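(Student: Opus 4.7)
The plan is to deduce the lemma from the uniqueness of the representing measure at the chosen preimage point, rather than constructing a peak function by hand. First, let $\psi\in A(G)$ be a peak function at $y$ and set $g:=\psi\circ F\in A(D)$. Since $F$ extends continuously to $\overline D$ and $F^{-1}(y)=:\{x_1,\ldots,x_l\}$ is finite, one checks that $g\equiv 1$ on $E:=F^{-1}(y)$ while $|g(z)|<1$ for every $z\in\overline D\setminus E$; in particular $E$ is a peak set for $A(D)$.

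Now fix an arbitrary $x_1\in E$ and let $\mu$ be any probability measure on $\overline D$ representing the evaluation functional at $x_1$, i.e.\ $\int f\,d\mu=f(x_1)$ for every $f\in A(D)$. Applying this identity to $f=g$ yields $\int g\,d\mu=1$; since $|g|\le 1$ on $\overline D$, this forces $g=1$ $\mu$-almost everywhere, so $\supp\mu\subset E$ and one may write $\mu=\sum_{j=1}^{l}\alpha_j\delta_{x_j}$ with $\alpha_j\ge 0$ and $\sum_j\alpha_j=1$.

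Because polynomials in $\C^n$ separate the finite set $E$, Lagrange interpolation produces, for every $i$, a polynomial $h_i\in A(D)$ with $h_i(x_j)=\delta_{ij}$. Applying the representing-measure identity to $h_i$ gives $\alpha_i=h_i(x_1)=\delta_{i1}$, so $\mu=\delta_{x_1}$. Hence $\delta_{x_1}$ is the unique representing measure at $x_1$, placing $x_1$ in the Choquet boundary of $A(D)$ on $\overline D$.

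Since $\overline D$ is a compact metric space, Bishop's theorem identifies the Choquet boundary of $A(D)$ with its set of peak points (cf.\ \cite{G}), so $x_1$ is a peak point for $A(D)$; as $x_1\in F^{-1}(y)$ was arbitrary, the lemma follows. I expect the main obstacle to be the invocation of this metrizable-case identification of peak points with the Choquet boundary; a more self-contained alternative is to apply the Bishop-Rudin-Carleson peak interpolation theorem at the peak set $E$ to obtain $\tilde h\in A(D)$ with $\tilde h|_E=h_1|_E$ and $\|\tilde h\|_{\overline D}=1$, in which case $\tilde h\cdot g$ is an explicit $A(D)$-peak function at $x_1$.
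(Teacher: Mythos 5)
Your argument is correct, but it proves the lemma by a genuinely different route than the paper. The paper's proof is constructive: after observing, exactly as you do, that $E=F^{-1}(y)$ is a finite peak set with peak function $\psi=\varphi\circ F$, it builds a polynomial $\Phi$ (a product of coordinate differences, normalized at the chosen fiber point) separating that point from the rest of $E$, and then manufactures an explicit peak function by Bishop's $1/4$--$3/4$ lemma applied to products $\psi^{l}\Phi^{m}$. You instead argue softly: any representing measure for the chosen point $x_1$ must be supported on $E$ because $\int g\,d\mu=1$ with $|g|\le 1$, and then your Lagrange-type polynomials (which play the role of the paper's $\Phi$) force $\mu=\delta_{x_1}$, so $x_1$ lies in the Choquet boundary; the Bishop--de Leeuw identification of the Choquet boundary with the peak points for a uniform algebra on a compact \emph{metrizable} space (Gamelin \cite{G}) finishes the proof. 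This is legitimate and, amusingly, it is precisely the representing-measure technique the authors themselves choose for Theorem~\ref{th: sh}, whereas they remark there that Bishop's method would also work; for Lemma~\ref{lem: ea} the roles are reversed. What each buys: the paper's route yields an explicit peak function and avoids invoking the Choquet-boundary theorem; yours is shorter and dispenses with the series construction at the cost of the metrizability-based abstract theorem. One small caveat on your proposed ``self-contained alternative'': the Bishop--Rudin--Carleson peak-interpolation theorem requires $E$ to be null for every measure annihilating $A(D)$, which is not automatic for a general peak set; here it does hold (restrict an annihilating measure to the finite peak set via $g^{n}\to\chi_E$ and kill the atoms with your polynomials $h_i$), but that verification should be stated rather than absorbed into the phrase ``at the peak set $E$.''
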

\begin{proof} The properness of $F$ implies that the fibers $F^{-1}(y)$ are non-empty.
Fix $y\in P(G)$ and let $\varphi\in A(G)$ be a function peaking at $y.$
Put $E:=F^{-1}(y)=\{x^1,\ldots,x^k\}.$ It is clear that $E$ is a peak set and $\psi=\varphi\circ F$ peaks at $E.$
Take any $j=1,\ldots,k.$ For any $l\neq j$ chose $\sigma_l=1,\ldots,n$ such that $x^{j}_{\sigma_l}\neq x^l_{\sigma_l}.$
Let us define $\tilde{\Phi}(\lambda) =  \prod_{l\neq j} (\lambda_{\sigma_l} - x^l_{\sigma_l})$
and $\Phi(\lambda)=\tilde{\Phi}(\lambda)\tilde{\Phi}(x^j)^{-1}.$
Now it suffices to apply $1/4-3/4$ method of Bishop (see e.g. \cite{N})
to the functions of the form $\psi^{l}(x) \Phi^{m}(x)$ with suitably chosen $l,m\in \mathbb N.$
\end{proof}

\begin{rem} Note that Lemma~\ref{lem: ea} remains true if $F:D\to G$ is a holomorphic mapping extending continuously
to a surjective mapping $\bar D\to \bar G$ whose fibers $F^{-1}(y)$ are finite whenever $y\in \partial G.$

What is more, since any peak set contains a peak point (see \cite{G})
we get that $P(G)\subset F(P(D))$ (where $P(D)$ denotes the set of peak points with respect to $A(D)$)
for any holomorphic mapping $F:D\to G$ which extends continuously to a surjective mapping $\bar D\to \bar G$.
\end{rem}

\begin{rem} We do not know whether for a proper holomorphic mapping $F:U\supset \bar D\to \bar G\subset V$
the inclusion $F^{-1}(A^k(G))\subset A^k(D)$ is true. If for example $D$
is smooth, then the $\bar{\partial}$-problem has a smooth solution on $\bar D$ (Kohn's Theorem (see \cite{Ko})).
Thus one may repeat the argument of Pflug from \cite{P} (see also \cite{N}) to get an affirmative answer to this problem.
\end{rem}

\section{Invariance of $c$-finite compactness under proper holomorphic mappings}
Now we show another result on invariance under proper holomorphic mappings. It turns out that
$c$-finite compactness is invariant with respect to proper holomorphic mappings (for definition of Carath\'eodory
(pseudo)-distance and a $c$-finitely compact domain see e.g. \cite{JP}) and in the proof of the result below we use once
more the properties of the symmetrized polydisc. The result below is motivated by the invariance of 
hyperconvexity and pseudoconvexity under proper holomorphic
mappings.

\begin{thm}\label{th: c-finite}
For any bounded domains $D,G\subset\C^n$ and for any proper, holomorphic mapping $F:D\mapsto G$ $c$-finite compactness of $D$
is equivalent to the $c$-finite compactness of $G$.
\end{thm}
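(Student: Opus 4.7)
The plan is to prove the two implications separately; in both I rely on the contraction property of the Carath\'eodory pseudodistance under holomorphic maps, and in the non-trivial direction I exploit that $\G_m$ is $c$-finitely compact (a consequence of Theorem~\ref{prop: pol} that was noted earlier).

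For the easy direction, suppose $G$ is $c$-finitely compact and fix $z_0\in D$. If $(z_k)\subset D$ leaves every compact subset of $D$, then properness of $F$ forces $(F(z_k))$ to leave every compact subset of $G$, so $c_G(F(z_0),F(z_k))\to\infty$; by contractivity, $c_D(z_0,z_k)\geq c_G(F(z_0),F(z_k))\to\infty$, which is what we need.

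For the harder direction, assume $D$ is $c$-finitely compact; let $m$ be the multiplicity of $F$ and fix $w_0^*\in G$ with preimage $F^{-1}(w_0^*)=\{z_1^*,\ldots,z_m^*\}$ (counted with multiplicity). Following exactly the device used in Lemma~\ref{lem:nzm}, to any $\phi\in\mathcal{O}(D,\D)$ I associate the holomorphic symmetrization
\[
g_\phi:=\pi_m\circ(\phi\times\cdots\times\phi)\circ F^{-1}\colon G\to\G_m,
\]
which extends across the branch locus by Riemann's extension theorem. Given $w_k\to w_0\in\partial G$ with fibers $F^{-1}(w_k)=\{z_{k,1},\ldots,z_{k,m}\}$, properness of $F$ ensures every $z_{k,j}$ leaves every compact subset of $D$; in particular $c_D(z_1^*,z_{k,1})\to\infty$. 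Taking near-extremizers of this Carath\'eodory distance and composing with M\"obius automorphisms of $\D$, I produce $\phi_k\in\mathcal{O}(D,\D)$ satisfying $\phi_k(z_1^*)=0$ and $|\phi_k(z_{k,1})|\to 1$. On the one hand, the Schwarz--Pick inequality applied at the fixed points $z_j^*$ yields $|\phi_k(z_j^*)|\leq \tanh c_D(z_1^*,z_j^*)=:r_j<1$, hence $g_{\phi_k}(w_0^*)$ lies in the compact set $K:=\pi_m\bigl(\{0\}\times\prod_{j=2}^m r_j\overline{\D}\bigr)\subset\subset\G_m$. On the other hand, the properness of $\pi_m\colon\D^m\to\G_m$ together with $|\phi_k(z_{k,1})|\to 1$ forces $g_{\phi_k}(w_k)$ out of every compact subset of $\G_m$. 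Combining the $c$-finite compactness of $\G_m$ with the triangle inequality (anchored at any fixed point of $\G_m$) gives $c_{\G_m}(g_{\phi_k}(w_0^*),g_{\phi_k}(w_k))\to\infty$, and one more application of contractivity yields $c_G(w_0^*,w_k)\geq c_{\G_m}(g_{\phi_k}(w_0^*),g_{\phi_k}(w_k))\to\infty$.

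The main technical obstacle is arranging a single function $\phi_k$ that simultaneously keeps $g_{\phi_k}(w_0^*)$ in a fixed compact subset of $\G_m$ and drives $g_{\phi_k}(w_k)$ out to $\partial\G_m$. The normalization $\phi_k(z_1^*)=0$ (together with Schwarz--Pick at the other $z_j^*$) handles the first requirement uniformly in $k$, while the $c$-finite compactness of $D$ handles the second through the coordinate $z_{k,1}$; the symmetrized polydisc is precisely the target that makes these two pieces of information compatible.
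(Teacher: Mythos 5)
Your argument is correct and is essentially the paper's own proof: the easy direction via contractivity of $c$ and properness, and the hard direction by symmetrizing a (near-)extremal Carath\'eodory function through $g=\pi_m\circ(\phi\times\cdots\times\phi)\circ F^{-1}$, keeping $g(w_0^*)$ in a fixed compact subset of $\G_m$ while pushing $g(w_k)$ to $\partial\G_m$, and then invoking the $c$-finite compactness of $\G_m$. The only differences are cosmetic (near-extremizers and a sequential formulation instead of exact extremal functions chosen for each $w$), so no further comment is needed.
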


\begin{proof} The sufficiency of $c$-finite compactness of  $G$ for $c$-finite compactness of $D$
follows from the holomorphic contractibility of $c$ and properness of $F$. To show sufficiency of $c$-finite compactness of $D$
for $c$-finite compactness of $G$ we proceed as follows.

Fix $w_0=F(z_0)\in G$. For any $w\in G$ we take a point $z$ from $F^{-1}(w)$
and choose a function $f\in\O(D,\D)$ with $f(z_0)=0$ and $|f(z)|=c_D^*(z_0,z)$.
Then the function $g_w:=\pi_m\circ(f\times\ldots\times f)\circ F^{-1}$ is a well-defined holomorphic function on $G$,
$g_w(G)\subset\G_m$ such that $g_w(w_0)$ lies in some fixed compact $K\subset\G_m$ and $g_w(w)\to\partial\G_m$ as $w\to\partial G$
(here we use the fact that $F$ is proper and $D$ is $c$-finitely compact).
Since $c_{G}(w_0,w)\ge c_{\G_m}(g_w(w_0),g_w(w))$ the $c$-finite compactness of $\G_m$ gives that $c_G(w_0,w)\to\infty$ which
finishes the proof.
\end{proof}

\begin{rem}
Recall here that $c$-finite compactness implies Carath\'eodory ($c$-)completeness.
Moreover, the question whether $c$-completeness implies $c$-finite compactness is not solved yet (see e. g. \cite{JP}).
It would be interesting to know if the notion of $c$-completeness is invariant under proper holomorphic mappings.
It is also interesting to know whether notions of completeness with respect to other invariant functions
(e.g. Kobayashi, Bergman or inner Carath\'eodory) remain invariant under proper holomorphic mappings.
In this context recall
that it is well-known that the Kobayashi completeness is invariant under holomorphic coverings (see e. g. \cite{JP}).
\end{rem}

\section{Peak points of Reinhardt domains}

Below we denote $M:=\{z\in\mathbb C^n: z_1\cdot\ldots\cdot z_n=0\}$. Recall also that a point $x$ of a convex set $X\subset\mathbb R^n$
is called {\it extremal} if there are no two different points $y,z\in X$ such that $x=\frac{y+z}{2}$.

In \cite{G1} a description of the set of peak points of compact, Reinhardt sets with respect to $H(K)$ was given,
where $H(K)$ is the uniform closure of the space of functions holomorphic in a neighborhood of $K$, was given.

It this section we shall describe the set of peak points of bounded Reinhardt domains of holomorphy with respect to $A(D)$.
In Section~6 this result will be extended to arbitrary bounded Reinhardt domains. We introduce some notation. Namely for a point
$z=(z_1,\ldots,z_n)\in D\subset\C^n$ with $z_j\neq 0$ we denote $\log z:=(\log|z_1|,\ldots,\log|z_n|)\in\R^n$ and $\exp(z):=(e^{z_1},\ldots, e^{z_n})$. For a Reinhardt domain $D\subset\C^n$
we denote $\log D:=\{\log z:z\in D, z_j\neq 0, j=1,\ldots,n\}$. Moreover, for a set $A\in \mathbb R^n$ we denote $\exp(A):=\{z\in \mathbb C^n:\ \log|z|\in A\}.$
For $z=(z_1,\ldots, z_n),$ $w=(w_1,\ldots, w_n)\in \mathbb C^n,$ we put $z\cdot w=(z_1w_1,\ldots, z_nw_n).$ 

\begin{lem}\label{lem: 1} Let $D$ be a bounded pseudoconvex Reinhardt domain in $\mathbb C^n$.
Let $z_0\in \partial D\setminus M$ be such that $\log z_0$ is an extremal point of $\overline{\log D}.$
Then there is a sequence $(f_\mu)\subset A(D)$ of Laurent monomials such that $f_{\mu}(z_0)\to 1$,
$||f_{\mu}||_D\to 1$ and $\lim_{\mu\to \infty} ||f_{\mu}||_{D\setminus U} =0$ for any Reinhardt neighborhood $U$ of $z_0.$
\end{lem}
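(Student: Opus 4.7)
The plan is to realize $f_\mu$ as a normalized Laurent monomial $f_\mu(z)=c_\mu z^{\alpha^{(\mu)}}$ with $\alpha^{(\mu)}\in\mathbb Z^n$, chosen so that $y\mapsto \langle\alpha^{(\mu)},y\rangle$ concentrates its values sharply at $y_0:=\log|z_0|$ on the closed convex set $K:=\overline{\log D}\subset\mathbb R^n$. Since $D$ is bounded pseudoconvex Reinhardt, $K$ is closed and convex; because $D$ is bounded, the recession cone $R$ of $K$ lies in the nonpositive orthant, so $R$ is pointed and $K$ contains no line. A Laurent monomial $z^\alpha$ with $\alpha\in\mathbb Z^n$ extends continuously to $\overline D$ if and only if $\alpha\in R^*$ (the polar cone of $R$); any supporting functional of $K$ at the finite point $y_0$ automatically lies in $R^*$, so the constraint that $f_\mu\in A(D)$ will be satisfied for free.

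The core step is to use extremality of $y_0$ to produce, for each $\mu$, an integer vector $\alpha^{(\mu)}\in R^*$ such that $\langle\alpha^{(\mu)},y\rangle\le M_\mu:=\max_{y\in K}\langle\alpha^{(\mu)},y\rangle$ on $K$, the gap $\Delta_\mu:=M_\mu-\langle\alpha^{(\mu)},y_0\rangle$ tends to $0$, and the slice $\{y\in K:\langle\alpha^{(\mu)},y\rangle\ge M_\mu-1\}$ is contained in the ball $B(y_0,1/\mu)$. If $y_0$ happens to be exposed by some real $\alpha$ in the interior of $R^*$, then one approximates $\alpha$ by rationals, clears denominators to obtain an integer $\alpha$ still strictly exposing $y_0$, and sets $\alpha^{(\mu)}:=\mu\alpha$; the slice shrinkage then follows from strict exposure together with the compactness of the level sets $\{y\in K:\langle\alpha,y\rangle\ge M-c\}$ (which are compact because $\alpha$ belongs to the interior of the dual recession cone, so $\langle\alpha,\cdot\rangle\to -\infty$ along every nonzero direction of $R$). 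In the general extreme case one invokes Straszewicz's theorem: exposed points $y^{(\mu)}\to y_0$ exist in $K$ and yield strictly exposing functionals, after which the integer scaling is balanced so that both the gap $\Delta_\mu$ and the slice diameter tend to zero simultaneously.

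With such $\alpha^{(\mu)}$ in hand, take $c_\mu\in\mathbb C$ of modulus $e^{-M_\mu}$ with argument chosen so that $c_\mu z_0^{\alpha^{(\mu)}}$ is a positive real number. A direct computation gives $\|f_\mu\|_D=1$ and $f_\mu(z_0)=e^{-\Delta_\mu}\to 1$. For any Reinhardt neighborhood $U$ of $z_0$, the image $\log U$ is a neighborhood of $y_0$ in $\mathbb R^n$ and so eventually contains $B(y_0,1/\mu)$, hence also the above slice; consequently $\sup_{y\in K\setminus\log U}\langle\alpha^{(\mu)},y\rangle\le M_\mu-1$, giving $\|f_\mu\|_{D\setminus U}\le e^{-1}$. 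Scaling $\alpha^{(\mu)}$ by an extra integer factor $N_\mu\to\infty$ then upgrades this estimate to $e^{-N_\mu}\to 0$, which is the convergence required.

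The main obstacle is the convex-geometric construction of $\alpha^{(\mu)}$, particularly in the extreme-but-not-exposed case, where no single real functional strictly separates $y_0$ from the rest of $K$. There one must carefully balance the approximation rate of the exposed points $y^{(\mu)}\to y_0$ against the scaling factors, so that $\Delta_\mu\to 0$ (ensuring $f_\mu(z_0)\to 1$) while the depth of the exposure times $N_\mu$ still tends to infinity (ensuring $\|f_\mu\|_{D\setminus U}\to 0$ for every fixed Reinhardt neighborhood $U$). Verifying the rational/integer approximation respects both the recession-cone constraint $\alpha^{(\mu)}\in R^*$ and the quantitative slice concentration is the delicate part; in the exposed case, a single integer direction with scales $\mu\alpha$ gives a clean construction.
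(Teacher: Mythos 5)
Your overall strategy (work on $K=\overline{\log D}$, build integer linear functionals concentrating at $y_0=\log|z_0|$, exponentiate and normalize) is the same basic idea as the paper's, but as written the argument has concrete errors and leaves the hard case unproved. First, the membership criterion ``$z^\alpha\in A(D)$ iff $\alpha\in R^*$'' is false, and with it the claim that the $A(D)$-constraint is ``satisfied for free'' for any supporting functional at $y_0$: for the Hartogs triangle $D=\{|w|<|z|<1\}$ one has $(-1,1)\in R^*$ (it even supports $K$ at the extreme point), yet $w/z$ has no continuous extension to $0\in\overline D$. Continuity at points of $\overline D\cap M$ requires strict negativity of the exponent on $R\setminus\{0\}$ (interior of the polar cone) together with an actual limit argument, and you never verify that your final integer vectors have this strictness. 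Second, your ``exposed case'' is wrong as stated: a rational approximation of a strictly exposing functional need not expose $y_0$ at all (take $\log D$ a disc and $y_0$ a boundary point whose outer normal has irrational slope: no rational functional exposes it), so ``an integer $\alpha$ still strictly exposing $y_0$, and $\alpha^{(\mu)}:=\mu\alpha$'' is generally impossible. What is needed instead is a sequence of rational directions $\alpha_\mu/k_\mu\to l$ with the product $k_\mu\,|l-\alpha_\mu/k_\mu|\to 0$, because clearing denominators multiplies the gap at $y_0$ by $k_\mu$; this is exactly why the paper invokes the multidimensional Dirichlet theorem with the rate $|l_j-\alpha_{\mu,j}/k_\mu|\le 1/(\mu k_\mu)$, and no such quantitative control appears in your plan.

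The more serious gap is the extreme-but-not-exposed case, which is where the entire difficulty of the lemma lies and where you only say that ``one must carefully balance'' the rates. Straszewicz gives exposed points $y^{(\mu)}\to y_0$, but an exposing functional at $y^{(\mu)}$ has gap at $y_0$ of order $|y^{(\mu)}-y_0|$ while its exposure depth outside a \emph{fixed} neighborhood $V$ of $y_0$ can degenerate arbitrarily fast as $\mu\to\infty$; your scaling scheme requires depth$/$gap$\to\infty$, and nothing in the soft argument guarantees this (nor do you control the unboundedness of $\log D$, where supporting functionals on $\partial R^*$ make the relevant slices non-compact). The paper resolves precisely this case by a different mechanism: induction on the dimension of the supporting face $\overline{\log D}\cap\{L=l_0\}$ (an extreme point of $K$ is extreme in the face), producing a functional $T$ there and showing by a recession-direction argument that $x\mapsto M(Lx-l_0)+Tx'-t_0$ works for $M$ large, with stability under small perturbations of the coefficients, after which Dirichlet approximation yields integer exponents. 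Your proposal would still have to supply an argument of comparable substance for this case, so as it stands it does not prove the lemma.
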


\begin{proof} We proceed inductively. For $n=1$ the assertion is trivial. Let $n>1.$ Fix $\epsilon>0,$ $N>0$
and $U$ a Reinhardt neighborhood of $z_0$. We are looking for a Laurent monomial $f$ in $A(D)$
such that $||f||_D<e^{\epsilon},$ $||f||_{D\setminus U}<e^{-N}$ and $f(z_0)>e^{-\epsilon}.$ Put $x_0:=\log z_0$, $G:=\log D$,
$V=\log U$ and let $\{L=l_0\}$, $Lx=\langle l,x\rangle ,$ $x\in \mathbb R^n,$ $l=(l_1,\ldots, l_n)\in \mathbb R^n$, $l\neq 0,$
be a supporting hyperplane to $G$ at $x_0,$ $G\subset \{L<l_0\}.$
We lose no generality assuming that $l_n\neq 0$ and that $V$ is bounded. Dividing $l$ and $l_0$ by $l_n$ we may assume that $l_n=1.$ Clearly $\bar G\cap \{L=l_0\}$ is convex
and $x_0$ is its extremal point. Let $G'=\proj_{n-1}(\bar G\cap \{L=l_0\})$ be the projection of $\bar G\cap \{L=l_0\}$
onto the first $n-1$ variables.  $G'$ is a convex subset and $x_0$ is its extreme point.
Note that its interior may be empty (viewed as a subset of $\mathbb R^{n-1}=\R^{n-1}\times \{0\} $).
However we may increase it, if necessary, to the closure of a convex domain such that $x_0'=\proj_{n-1}(x_0)$
is its extreme point and $\iint(\exp(G'))$ is a bounded Reinhardt domain of $\mathbb C^{n-1}.$ To increase it we may proceed as follows. Take a supporting hyperplane at $x_0'$ to $G'$.
This hyperplane divides $\R ^{n-1}$ into two sets. Choose the one containing $G'$
and take from its interior any $n-1$ vectors $y_1,\ldots, y_{n-1}$ such that $y_1-\tilde y,\ldots, y_{n-1}-\tilde y$ are linearly independent for some $\tilde y\in G'$. Direct calculations show that the convex hull of $G'\cup \{y_1,\ldots, y_{n-1}\}$ satisfies the desired property.

This allows us to apply the inductive assumption to the Reinhardt domain $\iint(\exp (G'))$.
Then, passing to the logarithmic image for any $\epsilon>0$ we find a linear mapping $T$,
$Tx'=\langle t',x'\rangle ,$ $x'\in \mathbb R^{n-1},$ $t'\in \mathbb Z^{n-1}$
and $t_0\in \mathbb R$ such that $T<t_0+\epsilon/2$ on $G',$ $|Tx_0'-t_0|<\epsilon/2$
and $T<-2N$ on $G'\setminus V'$, where $V'=\proj_{n-1}(V\cap \{L=l_0\})$.

Observe that for $M$ big enough the mapping $$T_M:x\mapsto M(Lx-l_0)+Tx'-t_0$$ is bounded from above on $G$ by $\epsilon$
and less than $-N$ on $G\setminus V.$

Actually, seeking a contradiction assume that it is not true. Then there exists a sequence $(M_n)\subset \mathbb R_{>0}$
converging to $\infty$ such that one of the two possibilities holds: either there is a sequence $(y_n)\subset G$
converging to $y_0$ such that $T_{M_n}(y_n)\geq\epsilon$ if $y_0\in V$ and $T_{M_n}(y_n)>-N$
if $y_0\not\in V$ or there is a sequence $(y_n)$ with no limit point in $\bar G$ such that $T_{M_n}(y_n)>-N.$
If the first possibility holds we immediately find that $L(y_0)=l_0,$ so in this case a contradiction follows from
the properties of the mapping $T.$

In the second case one may easily find a sequence of positive numbers $(p_n)$ such that $(p_ny_n)$
converges to $\tilde y\in (\mathbb R^n)_*$. Let $t>0.$ Clearly, the set $\bar G\cap \{T_{M_n}>-N\}$
is convex and contains $tp_n y_n + (1-tp_n)x_0$ for sufficiently large values of $n.$
Making use of the inequalities $T_{M_n}(tp_n y_n + (1-tp_n)x_0)>-N$ and $L(tp_n y_n + (1-tp_n)x_0)\leq l_0$
we easily find that $L(t\tilde y+ x_0)=l_0.$ What is more, the inequality $T_{M_n}(tp_n y_n + (1-tp_n)x_0)>-N$
implies that $T(t\tilde y'+x_0')-t_0\geq-N$. Collecting the above mentioned facts and making use of the behavior of $T$
on the hyperplane $\{L=l_0\}$ we get a contradiction for $t>0$ big enough (i.e. for $t$ such that $x_0+t\tilde y\not\in V$).

In the same way one may show the existence of $\delta >0$ such that $x\mapsto T_M x+\langle \alpha,x\rangle$
has the same properties provided that $||\alpha||<\delta$.

It follows from the multidimensional Dirichlet Theorem that for any $\mu\in \mathbb N$
there are integers $\alpha_{\mu,1},\ldots,\alpha_{\mu,n}$ and an integer $k_{\mu}$
such that $|l_j-\frac{\alpha_{\mu,j}}{k_{\mu} }|\leq \frac{1}{\mu k_{\mu}},$ $j=1,\ldots,n.$
Losing no generality we may assume that $(k_{\mu})_{\mu}$ converges to $\infty$.

Finally it suffices to observe that the functions defined by the formula
$$F_{\mu}(z)=e^{i\theta_{\mu} -l_0 k_{\mu}-t_0} z^{t+\alpha_{\mu}},$$
where $t=(t',0)\in \C^n$, satisfy the desired property for some $\theta_{\mu}\in \mathbb R$, $\mu>>1.$
Actually, let us compute $$\log F_{\mu}(e^x)=k_{\mu}(Lx-l_0)+ Tx-t_0+ \sum_{j=1}^n x_j(\alpha_{\mu,j}-k_{\mu}l_j).$$
Thus it is enough to make use of the properties of the constructed functions.
\end{proof}

\begin{thm}\label{th: sh} Let $D$ be a bounded pseudoconvex Reinhardt domain in $\mathbb C^n$.
Then $z\not\in M$ is a peak point with respect to $A(D)$ iff $\log z$ is an extremal point of the closure of the logarithmic image of $D.$

Moreover, $0$ is not a peak point for $A(D)$ and $z=(z',0)\in \mathbb C^{n-1}\times \{0\}$ is a peak point for $A(D)$
if and only if $z'$ is a peak point of $\proj_{n-1} (D)$
and $(\{z'\}\times \mathbb C)\cap \bar D = \{z\}.$
\end{thm}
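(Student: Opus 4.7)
The theorem has three parts: the equivalence ``$z\not\in M$ is a peak point $\Leftrightarrow$ $\log z$ is extremal in $\overline{\log D}$'' (the main work), the claim ``$0$ is not a peak point'', and the characterisation of peak points with one vanishing coordinate. For ``$\log z_0$ extremal $\Rightarrow z_0$ peak point'' in the first part, I would use Lemma~\ref{lem: 1}: the Laurent monomials $(f_\mu)\subset A(D)$ it produces approximately peak at $z_0$ in the sense that, after normalisation, $|f_\mu(z_0)|\to 1$ and $|f_\mu|$ becomes arbitrarily small outside any prescribed Reinhardt neighbourhood of $z_0$. From these monomials, a standard Bishop $\frac{1}{4}$-$\frac{3}{4}$ argument (combining enough of them so that the phase factors $\theta\mapsto e^{i\alpha\cdot\theta}$ separate points on the torus orbit of $z_0$) produces a genuine $A(D)$-peak function at $z_0$.

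For the converse, suppose $\varphi\in A(D)$ strictly peaks at $z_0\not\in M$ while $\log z_0 = (w_1+w_2)/2$ with distinct $w_1,w_2\in\overline{\log D}$. Set $c:=(w_2-w_1)/2$; replacing $(w_1,w_2)$ by $(\log z_0\pm tc)$ for small $t>0$ preserves membership in $\overline{\log D}$ (convexity) and lets me assume $c\not\in 2\pi\mathbb Z^n$. Pick $p\in\log D$ (nonempty, open, convex), set $w_j^\varepsilon=(1-\varepsilon)w_j+\varepsilon p\in\log D$, $c^\varepsilon=(1-\varepsilon)c$, and choose $z_0^\varepsilon\in D$ with $\log|z_0^\varepsilon|=(1-\varepsilon)\log z_0+\varepsilon p$ and arguments matching those of $z_0$, so $z_0^\varepsilon\to z_0$. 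The holomorphic disc $\Phi_\varepsilon(\lambda)=z_0^\varepsilon\exp(\lambda c^\varepsilon)$ sends $\overline{\mathbb D}$ into $D$, because $\log|\Phi_\varepsilon(\lambda)|$ lies on the segment $[(1-\varepsilon)w_1+\varepsilon p,\,(1-\varepsilon)w_2+\varepsilon p]\subset\log D$ whenever $|\operatorname{Re}(\lambda)|\le 1$. The maximum principle applied to $\varphi\circ\Phi_\varepsilon$ yields $\lambda_\varepsilon\in\partial\mathbb D$ with $|\varphi(\Phi_\varepsilon(\lambda_\varepsilon))|\ge|\varphi(z_0^\varepsilon)|\to 1$; passing to a subsequential limit $\lambda^*\in\partial\mathbb D$, strict peaking of $\varphi$ forces $z_0\exp(\lambda^* c)=z_0$, equivalently $\lambda^* c_j\in 2\pi i\mathbb Z$ for every $j$. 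Because $c$ is a nonzero \emph{real} vector, this forces $\operatorname{Re}(\lambda^*)=0$, $|\operatorname{Im}(\lambda^*)|=1$, and hence $c\in 2\pi\mathbb Z^n$, contradicting the preparatory choice. The hard part is simultaneously arranging $\Phi_\varepsilon(\overline{\mathbb D})\subset D$ via the interior perturbation and ruling out the integer-lattice obstruction by rescaling $c$.

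The remaining assertions are handled by torus averaging. For ``$0$ is not a peak point'': if $\varphi\in A(D)$ peaks at $0\in\bar D$, boundedness of $\varphi$ near $0$ kills every Laurent coefficient with a negative exponent, so $\varphi$ has a Taylor series at $0$ with constant term $\varphi(0)=1$; for any $z\in D\setminus\{0\}$ the orbit $\mathbb T^n\cdot z$ lies in $D$ and
\[
\int_{\mathbb T^n}\varphi(T_\theta z)\,\frac{d\theta}{(2\pi)^n}=\varphi(0)=1,
\]
yet the integrand has modulus strictly less than $1$ on the compact torus, a contradiction. For $z=(z',0)$, the direction ``$z'$ peak point and $(\{z'\}\times\mathbb C)\cap\bar D=\{z\}\Rightarrow z$ peak point'' is immediate by pulling back: $\varphi(u,\zeta):=\tilde\varphi(u)$, where $\tilde\varphi$ peaks at $z'$ in $A(\proj_{n-1}(D))$, is a strict $A(D)$-peak function at $z$ precisely because of the slice condition. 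For the converse, given a peak function $\varphi$ at $z$, the average
\[
\Phi(u,\zeta)=\int_0^{2\pi}\varphi(u,e^{i\theta}\zeta)\,\frac{d\theta}{2\pi}\in A(D)
\]
is holomorphic and rotationally invariant in $\zeta$, hence constant in $\zeta$ on each connected slice, and descends to $\tilde\Phi\in A(\proj_{n-1}(D))$ with $\tilde\Phi(z')=1$. Strictness of the integral estimate $|\Phi|\le 1$, combined with the strict peak condition on $\varphi$ at the single point $z$, then yields simultaneously the peaking of $\tilde\Phi$ at $z'$ and the slice condition $(\{z'\}\times\mathbb C)\cap\bar D=\{z\}$.
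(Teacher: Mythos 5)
Your treatment of the main equivalence is largely sound, and it differs from the paper in an interesting way: for ``$\log z_0$ extremal $\Rightarrow$ peak point'' the paper does \emph{not} run Bishop's $1/4$--$3/4$ scheme but instead uses Gamelin's representing-measure argument (the monomials of Lemma~\ref{lem: 1} force any representing measure for $z_0$ to live on the torus orbit $z_0\cdot\mathbb T^n$, and the coordinate functions then force it to be the Dirac measure at $z_0$). Your Bishop route is the one the authors explicitly say is available, but your sketch is thinnest exactly where the work lies: every Laurent monomial has \emph{constant modulus on every torus orbit}, and the exponents produced by Lemma~\ref{lem: 1} come from a Dirichlet approximation of the supporting hyperplane, so you have no control ensuring that the ``phase factors $e^{i\alpha\cdot\theta}$ separate points'' of the orbit (for that you would need the chosen exponents to generate all of $\mathbb Z^n$). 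One must add an auxiliary separating function and combine it with the monomials as in the proof of Lemma~\ref{lem: ea} (powers $f_\mu^k h^m$), or argue via representing measures as the paper does. Your converse direction (non-extremal $\Rightarrow$ not a peak point, via the discs $\lambda\mapsto z_0^\varepsilon\exp(\lambda c^\varepsilon)$ and a limiting maximum-principle argument) is correct and in fact supplies the details the paper dismisses in one sentence.

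Two genuine gaps remain in the auxiliary parts. For ``$0$ is not a peak point'' your key justification is false as stated: continuity (or boundedness) on $\overline D$ does \emph{not} kill negative Laurent exponents --- on the Hartogs triangle $\{|z_2|<|z_1|<1\}$, a bounded pseudoconvex Reinhardt domain with $0\in\overline D$, the function $z_2^2/z_1$ belongs to $A(D)$. The identity you actually need, namely that the torus average $\int_{\mathbb T^n}\varphi(e^{i\theta}\cdot z)\,d\theta/(2\pi)^n$ (which equals the zeroth Laurent coefficient for every $z\in D\setminus M$) equals $\varphi(0)=1$, must instead be obtained by letting $z\to 0$ inside $D\setminus M$ and using uniform continuity of $\varphi$ on $\overline D$; with that repair your averaging argument works (the paper argues differently, via a subharmonic function on an analytic disc). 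More seriously, in the last part the claim that the averaging ``yields simultaneously'' the slice condition $(\{z'\}\times\mathbb C)\cap\overline D=\{z\}$ is unsupported: if $(z',\zeta_0)\in\overline D$ with $\zeta_0\neq 0$, the averaged function $\Phi$ still peaks strictly at $z$ and no contradiction appears from the integral estimate alone. The necessity of the slice condition needs its own argument, e.g.\ log-convexity of $\log D$ shows $\{z'\}\times\{|\zeta|\le|\zeta_0|\}\subset\overline D$, and then a limit of analytic discs lying in $D$, Riemann's removable singularity theorem at $\zeta=0$ and the maximum principle contradict strict peaking. In addition, the descent of $\Phi$ to $\tilde\Phi\in A(\proj_{n-1}(D))$ is not automatic: the $\zeta$-slices of a Reinhardt domain may be disconnected, so the rotation-invariant function $\Phi(u,\cdot)$ is only locally constant and its values on different components need not agree, and continuity of $\tilde\Phi$ up to $\overline{\proj_{n-1}(D)}$ requires a separate check. (The paper declares this part ``self-evident,'' so it offers no guidance either, but your proposal asserts these steps follow from estimates that do not deliver them.)
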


Note that Lemma \ref{lem: 1} provides a tool which allows us to use the Bishop's method of constructing peak functions.
Below we present a slightly different proof relying upon the methods developed by Gamelin and involving a concept of representing
measures (see \cite{G}).

\begin{proof} Some ideas of the proof are derived from \cite{G1}. It is clear that any $z\in \partial D\setminus M$ such that $\log z$
is not an extremal point of $\overline{\log D}$ lies in an analytic disc contained in $\overline D$, hence it is not a peak point.

We shall show that every $z\in \partial D\setminus M$ such that $\log z$ is extremal in $\overline{\log D}$ is a peak point for $A(D).$
Let $\nu$ be a representing measure for $A(D)$. Applying the previous lemma to the point $z$ we get a sequence $(f_{\mu})$ of Laurent monomials with properties as in Lemma~\ref{lem: 1}. In particular, $f_{\mu}(z_0)=\int_D f_{\mu}d\nu$ and $f_{\mu}$ is a bounded sequence in $L^{\infty}(d\nu).$ Let $f$ be a weak-star limit point of $f_{\mu}.$  It follows from the choice of $f_{\mu}$ that $||f||_D \leq 1$ and $1=|\int_D f d\nu|.$ Thus $|f|=1$ $\nu$-almost everywhere and what follows the support of the measure $\nu$ is contained in the torus $z_0\cdot \mathbb T^n.$

Applying the formula $f(z_0)=\int f d\nu$ to every component of $g(z)=e^{i\theta}\cdot z,$ where $\theta \in \mathbb R^n$ is such that $|z_0|=e^{i\theta}\cdot z_0$, we easily find that $\mu$ is a Dirac measure supported at $z_0.$

Now suppose that $0\in \overline D.$ Then there are $\alpha_1,\ldots,\alpha_n\in \mathbb R_{\leq 0}$ not vanishing simultaneously such that $x_0+E(\alpha):=x_0+\{(\alpha_1 t,\ldots,\alpha_n t):\ t>0\}\subset \log D$ for some (so by a simple convexity argument for any) $x_0\in \log D.$ To simplify the notation assume that $x_0=0.$ Seeking a contradiction suppose that $f\in A(D)$ peaks at $0.$ Observe that $v(\lambda)=(\sup |f(\lambda^{\alpha})|)^*$, where the supremum is taken over the branches of $\lambda^{\alpha}=(\lambda^{\alpha_1}, \ldots, \lambda^{\alpha_n})$, is a subharmonic function on the unit disc of the complex plane attaining its maximum at $0$. Thus $v$ is constant. From this we easily get a contradiction.

The rest part of the theorem is self-evident.
\end{proof}

\section{Some remarks on Bremermann's claim}
Bremermann claimed in \cite{B} that the Shilov boundary of any domain with the schlicht envelope of holomorphy
coincides with the Shilov boundary of its envelope of holomorphy.
Note that $A(\widehat D)\subset A(D)$ so we immediately see that $\partial_s\widehat D\subset \partial_s D$.
Bremerman stated that the inclusion $\partial_s D\subset \partial_s \widehat D$ is trivial, however we do not know whether
this statement is true. That would follow from the inclusion $A(D)\subset A(\widehat D)$ which however does not hold in general
as the following example (in the class of bounded Reinhardt domains) shows

\begin{exa} Consider the Reinhardt domain $D\subset\mathbb C_*\times \mathbb C$ such that $\mathbb D_*\times\{0\}\subset D$
and \begin{multline*} \log D= \{(x,y) \in \mathbb R_{<1}\times \mathbb R_{<0}:\ y<0 \ \text{if}\ x\in (0,1) \
\text{and}\\ y<-n^2-n\ \text{if}\ x\in (-n^2,-(n-1)^2],\ n\in \mathbb N\}.
 \end{multline*} Observe that $\widehat D=\{(z,w)\in \mathbb C^2:\ |w|<|z|<1\ \text{or}\ 1\leq |z|<e,\ |w|<1\}.$
Moreover, one may check that the function $f(z,w):=\frac{w}{z}$ is an element of the algebra $A(D)$
obviously not belonging to $A(\widehat D).$
\end{exa}

Although we do not know whether the claim of Bremermann holds in general we know that
in the class of Reinhardt domains the claim of Bremermann does hold.

Recall that if $D$ is a Reinhardt domain, then the (schlicht) envelope of holomorphy $\widehat D$ exists and
$\widehat D$ is a Reinhardt domain.

\begin{cor}\label{cor: envelope} Let $D$ be a bounded Reinhardt domain in $\mathbb C^n.$
Then its set of peak points coincides with the set of peak points of the envelope of holomorphy of $D.$
\end{cor}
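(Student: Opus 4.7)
My plan is to apply Theorem~\ref{th: sh} to the envelope of holomorphy $\widehat D$ of $D$, which is itself a bounded pseudoconvex Reinhardt domain, and then to prove the equality $P(D) = P(\widehat D)$. A classical fact about Reinhardt envelopes of holomorphy gives $\overline{\log \widehat D} = \overline{\operatorname{conv}(\overline{\log D})}$ (modulo relative completeness at coordinate hyperplanes), so Theorem~\ref{th: sh} applied to $\widehat D$ identifies $P(\widehat D)\setminus M$ with the extremal points of this convex hull.

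For the inclusion $P(\widehat D)\subset P(D)$, let $a\in P(\widehat D)\setminus M$. By Milman's theorem, every extremal point of $\overline{\operatorname{conv}(\overline{\log D})}$ lies in $\overline{\log D}$, so $a\in\overline D$. Moreover, inspection of the proof of Theorem~\ref{th: sh} shows that the peak function at $a$ for $\widehat D$ is built (via Lemma~\ref{lem: 1}) from Laurent monomials, which are holomorphic on $(\C_{*})^{n}$ and continuous on $\overline{\widehat D}\supset\overline D$. It therefore restricts to a peak function in $A(D)$ at $a$. Peak points in $M$ are handled analogously using the second half of Theorem~\ref{th: sh}, since the condition $(\{z'\}\times\C)\cap\overline{\widehat D}=\{z\}$ descends to $\overline D$ as well.

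For the reverse inclusion $P(D)\subset P(\widehat D)$, let $a\in P(D)\setminus M$ with peak function $f\in A(D)$. Since $D$ is Reinhardt, the Laurent expansion of $f$ converges on the log-convex hull $\widehat D$, yielding an extension $\hat f\in\O(\widehat D)$ with $\sup_{\widehat D}|\hat f|=\sup_{D}|f|=|f(a)|$ (by Hadamard's three-circles-type reasoning in several variables, or equivalently because the extreme log-directions of $\widehat D$ lie in $\log D$). If $a$ were interior to $\widehat D$, the maximum modulus principle would force $\hat f$ constant, contradicting the strict peak; hence $a\in\partial\widehat D$. To conclude $a\in P(\widehat D)$ via Theorem~\ref{th: sh}, it remains to show that $\log a$ is extremal in $\overline{\log\widehat D}$. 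If not, $\log a$ lies in the relative interior of some flat segment $[y,z]\subset\partial(\overline{\log\widehat D})$, and the analytic disc $\phi(\lambda)=a\cdot\exp(\lambda(z-y)/2)$ maps $\overline{\D}$ into $\partial\widehat D$ through $a$. On the imaginary axis $\{\operatorname{Re}\lambda=0\}$, $\phi$ lands on the Reinhardt torus over $\log a$, which is contained in $\overline D$; combining the strict peak property of $f$ on this torus with a Phragm\'en--Lindel\"of argument on the half-disc $\{\operatorname{Re}\lambda>0\}$ (where $\phi$ may be perturbed slightly into $\widehat D$ and $\hat f$ controlled via sup-norm preservation) forces $f\circ\phi$ to be constant on $\overline{\D}$, a contradiction.

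The main obstacle is this last step: showing extremality of $\log a$ in $\overline{\log\widehat D}$. The analytic-disc argument from the proof of Theorem~\ref{th: sh} does not apply directly, because the disc $\phi$ leaves $\overline D$ and enters $\overline{\widehat D}\setminus\overline D$, where $\hat f$ need not be continuous (as shown by the example preceding the corollary). Overcoming this requires the Reinhardt torus symmetry of $D$ and $\widehat D$ together with the sup-norm preservation of the envelope extension; this is precisely what distinguishes the Reinhardt setting and allows Bremermann's claim to be verified here.
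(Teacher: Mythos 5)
Your overall strategy coincides with the paper's: reduce everything, via Theorem~\ref{th: sh} applied to $\widehat D$, to showing that a peak point $a\in P(D)\setminus M$ has $\log a$ extremal in $\overline{\operatorname{conv}(\log D)}=\overline{\log\widehat D}$. But that extremality claim is exactly the substance of the corollary, and you do not prove it: your analytic-disc/Phragm\'en--Lindel\"of sketch requires evaluating $f$ (or its envelope extension $\hat f$) along a disc $\phi(\overline{\D})\subset\partial\widehat D$ that leaves $\overline D$, where $\hat f$ need not even be defined, let alone continuous --- the very phenomenon exhibited by the example preceding the corollary, and you yourself flag this as ``the main obstacle'' and only assert, without an argument, that Reinhardt symmetry and sup-norm preservation should overcome it. As written, the hard inclusion $P(D)\subset P(\widehat D)$ is therefore not established. (Two smaller points: your appeal to Milman's theorem is made for the possibly unbounded, non-compact set $\overline{\log D}$, so it needs justification or replacement by the Shilov-boundary inclusion $\partial_s\widehat D\subset\partial_s D$ the paper uses; and for the hard direction you never treat peak points lying on $M$.)

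For comparison, the paper circumvents precisely the obstruction that stops you by never letting $\hat f$ approach $\partial\widehat D$ outside $\overline D$. Assuming $\log z_0$ is not extremal, it writes $\log z_0=\sum_j p_jx_j+t_0\alpha$ as in \eqref{wypfor}, with $x_j$ extremal in the hull and $\alpha$ in the recession cone; since tori over the $x_j$ lie in $\partial_s\widehat D\subset\partial_s D$, the $x_j$ belong to $\overline{\log D}$. If $t_0=0$ it applies Theorem~\ref{th: sh} to a small auxiliary Reinhardt domain $S_\epsilon=\exp(\mathfrak S_\epsilon)$ built on a simplex with vertices $y_j\in\log D$ near the $x_j$; this domain is relatively compact in $\widehat D$, so $\hat f$ is holomorphic in a neighborhood of its closure, and its maximum there is attained near the tori over the $y_j$, i.e.\ at points of $D$ far from $z_0$, with $|f|$ close to $1$ --- contradicting the peak property. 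If $t_0>0$ it runs a subharmonic maximum-principle argument in the recession direction $\alpha$, again only inside $\widehat D$, and the case $z_0\in M$ is reduced to a projection. Some such mechanism (working with $\hat f$ only on compact subsets of $\widehat D$ and transferring the information back to $\overline D$ through the Reinhardt structure) is what your proposal still needs to supply.
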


\begin{proof}  It suffices to show that if $z_0\in \partial D$ is a peak point for $A(D)$, then it is also a peak point for $A(\widehat D).$ Put $G=\log D.$

First suppose that $z_0$ omits $M$. We aim at showing
that $\log z_0$ is extremal in $\overline{conv(G)}$ (here $conv(G)$ stands for the convex hull of $G$,
i.e. $conv(G)=\log\widehat D$). To get a contradiction assume that $\log z_0$ is not an extreme point of $\overline{conv(G)}$.

We shall make use of a few definitions. Following \cite{Z} for a convex domain $U$ of $\mathbb R^n$ we define $$\mathfrak C=\mathfrak C (U)=\{\alpha\in (\mathbb R^n)_*:\ x+\alpha\mathbb R_+\subset U\ \text{for some (any)}\ x\}.$$
Boundedness of $D$ implies that $\mathfrak C\subset (\mathbb R_{\leq 0})^n.$ Note that for any $x_0\in \overline U$ there are $m\geq 1,$ $x_1,\ldots,x_m$ extremal in $\overline U$, $p_1,\ldots,p_m>0$,
$\sum_{j=1}^m p_j=1,$ $\alpha\in \mathfrak C$ and $t_0\geq 0$ such that
\begin{equation}\label{wypfor} x_0=\sum_{j=1}^m p_jx_j+\alpha t_0.\end{equation}
To prove it we proceed inductively. Let $n>1$ (for $n=1$ there is nothing to prove).
The statement follows immediately from the Krein-Milman theorem provided that $U$ is bounded so suppose that $U$ is unbounded
and take any $\alpha\in \mathfrak C$ (unboundedness of $U$ implies that $\mathfrak C(U)\neq \emptyset$).
Then there is $t_0\geq 0$ such that $y_0=x_0-\alpha t_0\in \partial U.$
Let $H$ be a supporting hyperplane to $U$ at $y_0.$
Applying the inductive assumption to $y_0$ and $H\cap \overline{U}$
(increase it to a convex domain of $\mathbb R^{n-1}$ if necessary) and using the fact that $\mathfrak C$
is a cone we easily obtain the formula \eqref{wypfor}.

Now we come back to the proof of the corollary. Let $f\in A(D)$ be a peak function at $z_0$
and denote by $\hat f$ its extension to $\widehat D.$
Applying a decomposition \eqref{wypfor} to $x_0=\log z_0$ and $U=conv(G)$ we obtain $x_1,\ldots,x_m$ extremal in $\overline{conv(G)}$
and $t_0\geq 0$ fulfilling \eqref{wypfor}. Every point $z_j\in \mathbb C^n$ whose logarithmic image equals $x_j$ lies in the Shilov boundary of $\widehat D$, $j=1,\ldots,m$. Since $\partial_s \widehat D\subset \partial_s D$ we find that $x_1,\ldots,x_m\in\overline G$.

First consider the case $t_0=0.$ Since $\log z_0$ is not extremal in $\overline{conv(G)}$, $\log z_0\neq x_j,$ $j=1,\ldots,m$.
Take any $y_j\in G$ close to $x_j$, $j=1,\ldots,m,$ and consider a closed simplex $\mathfrak S$
with vertices $y_1,\ldots,y_m$. Clearly $\hat f$ is holomorphic in a neighborhood of $\exp(\mathfrak S)$.
Let $\mathfrak S_{\epsilon}=\{x\in \mathbb R^n: \dist(x, \mathfrak S)<\epsilon\}$ be an $\epsilon$-hull of $\mathfrak S$. Then $S_{\epsilon}:=\exp(\mathfrak S_{\epsilon})$ is a Reinhardt domain and its Shilov boundary is close to $\{x_1,\ldots, x_m\}$ (in the sense of the Hausdorff distance) provided that $\epsilon$ is close to $0$ and $y_j$ are close to $x_j,$ $j=1,\ldots, m.$ Using Theorem~\ref{th: sh} applied to $S_{\epsilon}$ one can find that $|f|$ attains values close to $1$ on an arbitrary small neighborhood of $\{z\in \mathbb C^n:\ \log |z|=y_j\ \text{for some}\ j\}$.
This gives an immediate contradiction with the fact that $f$ peaks at $z_0$ (as $|f|$ is small far away of the point $z_0$).

If $t_0>0$ then again we take $y_j\in G$ sufficiently close to $x_j$. Put $w_0:=\exp(\sum_{j=1}^m p_j y_j)\in G.$ Note that $w_0 \cdot \lambda^{\alpha}\in \widehat D$ for any $\lambda\in \mathbb D_*$. Therefore $u(\lambda)=(\sup |\hat f(w_0 \cdot \lambda^{\alpha})|)^*,$ where the supremum is taken over all branches of $\lambda^{\alpha}=(\lambda^{\alpha_1},\ldots, \lambda^{\alpha_n})$, is subharmonic on a neighborhood of $\overline{\mathbb D}$ (it may be extended through $0$ because it is bounded). By the maximum principle applied to the subharmonic function $u$ we get that $\max_{e^{-t_0}\partial \D} u\leq \max_{\partial \D}u.$ Thus, similarly as in the case $t_0=0$ we find that there is $z\in \bar D$ close to $e^{y_j}$ for some $j=1,\ldots,m,$ such that $f(z)$ is close to $1$. This is contrary to the fact that $f$ peaks at $z_0$.

In the case when $z_0\in M$ we proceed similarly. Actually, one may easily see that $0\not\in \partial_s D$. Suppose that $z_0\in M\setminus \{0\}$. Losing no generality assume that $z_0=(z_0',0)\in \mathbb C^k_*\times \mathbb C^{n-k}$. Note that $\bar D\cap (\{z_0'\}\times \mathbb C^{n-k})=(z_0',0)$ (use the fact that $z_0$ peaks for $A(D)$). If $z_0'$ is not a peak point of the projection of $\widehat D$ onto $\C^k$, then to get a contradiction one may use the reasoning from the first part of the proof.
\end{proof}

\begin{cor}Let $D$ be a bounded Reinhardt domain in $\mathbb C^n.$ Then its Shilov boundary coincides with the Shilov boundary of its envelope of holomorphy.
\end{cor}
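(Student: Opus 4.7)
The plan is to reduce this corollary immediately to Corollary~\ref{cor: envelope} via the Gamelin identification of the Shilov boundary of a bounded domain with the closure of its set of peak points. Recall that the paper has already invoked this identification: for any bounded domain $\Omega\subset\C^n$ one has $\partial_s\Omega=\overline{P(\Omega)}$ (see \cite{G}). The entire content of the statement should follow by applying this equality to both $D$ and $\widehat D$ and then quoting Corollary~\ref{cor: envelope}.

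First I would check the one preliminary point that keeps the argument clean, namely that $\widehat D$ is itself a bounded Reinhardt domain so that the Gamelin result applies on the envelope side. This is standard: if $D\subset\C^n$ is a bounded Reinhardt domain, then $\log D$ is contained in some half-space $\{x:x_j\leq C\}$ for each $j$, and the same holds for its convex hull, so $\widehat D$ (whose logarithmic image is $\mathrm{conv}(\log D)$, augmented suitably along the coordinate axes) is again a bounded Reinhardt domain. In particular $A(\widehat D)$ makes sense and the Gamelin theorem yields $\partial_s\widehat D=\overline{P(\widehat D)}$.

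Now the proof reduces to a three-term chain: $\partial_s D=\overline{P(D)}=\overline{P(\widehat D)}=\partial_s\widehat D$, where the first and third equalities are Gamelin's theorem and the middle equality is precisely the content of Corollary~\ref{cor: envelope}. I do not anticipate any obstacle, since all the real work — identifying the peak points of $D$ and of $\widehat D$ through the extremality of their logarithmic images, together with the treatment of boundary points on the coordinate axes — has already been absorbed into Corollary~\ref{cor: envelope}. The only small subtlety worth mentioning explicitly in the write-up is that the inclusion $\partial_s\widehat D\subset\partial_s D$ (already remarked at the beginning of Section~6 via $A(\widehat D)\subset A(D)$) is automatic, whereas the reverse inclusion is precisely what Corollary~\ref{cor: envelope} supplies in the Reinhardt setting, in contrast to the general (unsettled) Bremermann claim.
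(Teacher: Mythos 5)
Your argument is correct and is exactly the derivation the paper intends (the paper states this corollary without proof, as an immediate consequence of Corollary~\ref{cor: envelope} and Gamelin's identification $\partial_s\Omega=\overline{P(\Omega)}$ for bounded domains). Your added check that $\widehat D$ is again a bounded Reinhardt domain, so that the Gamelin result applies on the envelope side, is a sensible and accurate remark.
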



\begin{thebibliography}{9}
\bibitem{AY} \textsc{ J.~Agler, N.~J.~Young},
\textit{A Schwarz lemma for the symmetrized bidisc},
Bull. London Math. Soc. 33 (2001), 175-186.
\bibitem{APS} \textsc{M. Andersson, M. Passare, R. Sigurdsson}, \textit{Complex convexity and analytic functionals},
Birkh¨auser, Basel-Boston-Berlin, 2004.
\bibitem{Be} \textsc{A.~Beck}, \textit{A theorem on maximum modulus}, Proc. Am. Math. Soc. 15, 345-349 (1964).
\bibitem{B}  \textsc{H.J. Bremermann}, \textit{On a generalized Dirichlet problem for plurisubharmonic functions and pseudoconvex domain. Characterization of Silov boundaries}, Trans. Amer. Math. Soc. 91 (1959), 246-276.
\bibitem{C2004} \textsc{C.~Costara}, \textit{The symmetrized bidisc and Lempert's theorem}, Bull. London Math. Soc. 36, 656-662 (2004).
\bibitem{Cos} \textsc{C. Costara}, \textit{On the spectral Nevanlinna-Pick problem}, Studia Math., 170 (2005), 23-55.
\bibitem{EZ} \textsc{A.~Edigarian, W.~Zwonek}, \textit{Geometry of the symmetrized polydisc},
Arch. Math. (Basel) 84 (2005), 364-374. 
\bibitem{G} \textsc{T.W. Gamelin}, \textit{Uniform Algebras, Prentice Hall}, 1969. Second Edition, Chelsea Press, 1984.
\bibitem{G1} \textsc{T.W. Gamelin}, \textit{Peak points for algebras on circled sets}, Math. Ann. 238(2), 131-139 (1978).
\bibitem{JP} \textsc{M. Jarnicki, P. Pflug}, \textit{Invariant Distances and Metrics in Complex Analysis},
de Gruyter Expositions in Mathematics 9, Walter de Gruyter, 1993.
\bibitem{Ko} \textsc{J.J.~Kohn}, \textit{Global regularity for $\overline{\partial}$ on weakly pseudo-convex manifolds},
Trans. Am. Math. Soc. 181, 273-292 (1973).
\bibitem{K} \textsc{\L. Kosi\'nski}, \textit{Geometry of quasi-circular domains and applications to tetrablock},
Proc. Amer. Math. Soc. 139 (2011), 559-569.
\bibitem{NPZ} \textsc{N. Nikolov, P. Pflug, W. Zwonek}, \textit{An example of a bounded $\C$-convex domain
which is not biholomorphic to a convex domain}, Math. Scand. 102 (2008), no. 1, 149--155.
\bibitem{N} \textsc{A. Noell}, \textit{Peak Points for Pseudoconvex Domains: A Survey}, J. Geom. Anal., 18, no. 4, 1058-1087, (2008).
\bibitem{P} \textsc{P. Pflug}, \textit{\"Uber polynomiale Funktionen auf Holomorphiegebieten}, Math. Z. 139, 133-139 (1974).
\bibitem{Z} \textsc{W. Zwonek}, \textit{Completeness, Reinhardt domains
and the method of complex geodesics in the theory of invariant functions}, Diss. Math., 388 (2000).
\end{thebibliography}
\end{document}